\newtheorem{theorem}{Theorem}[section]
\newtheorem{proposition}[theorem]{Proposition}
\newtheorem{lemma}[theorem]{Lemma}
\newtheorem{corollary}[theorem]{Corollary}
\theoremstyle{definition}
\theoremstyle{remark}
\newtheorem{remark}[theorem]{Remark}
\numberwithin{equation}{section}
\def\pd{{\rm pd}}
\def\fd{{\rm fd}}
\def\id{{\rm id}}
\begin{document}

\title[]{When an amalgamated duplication of a ring along an ideal is quasi-Frobenius }

\author[N. Mahdou]{Najib Mahdou}
\address{Department of Mathematics, Faculty of Science and Technology of Fez, Box 2202, University S. M.
Ben Abdellah Fez, Morocco}
 \email{mahdou@hotmail.com}

\author[M. Tamekkante]{Mohammed Tamekkante}
\address{Department of Mathematics, Faculty of Science and Technology of Fez, Box 2202, University S. M.
Ben Abdellah Fez, Morocco}
 \email{tamekkante@yahoo.fr}

\subjclass[2000]{16E05, 16E10, 16E30, 16E65}



\keywords{\emph{Amalgamated duplication} of ring along an ideal;
quasi-Frobenius ring;  global and weak dimensions of rings; ; n-FC
rings. }

\begin{abstract} In this paper, we
characterize an amalgamated duplication of a ring $R$ along a proper ideal $I$, $R\bowtie I$, which is   quasi-Frobenius.
\end{abstract}

\maketitle

\section{Introduction} Throughout this paper, all rings are commutative with identity
element, and all modules are unital. If $M$ is an $R$-module, we use
$\pd_R(M)$, $\id_R(M)$ and $\fd_R(M)$ to denote, respectively, the
classical projective, injective and flat dimensions of $M$. It is
convenient to use \textquotedblleft local\textquotedblright to refer
to (not necessarily Noetherian) rings with a unique maximal ideal.
 With an idempotent element we means element $a\in R$ such that $a^2=a$.\\
\emph{The amalgamated duplication of a ring $R$ along  an
$R$-submodule of the total ring of quotients $T(R)$}, introduced by
D'Anna and Fontana and denoted by $R\bowtie E$ (see \cite{D'Anna
0,D'Anna,D'Anna basic}), is  the following subring of $R\times T(R)$
(endowed with the usual componentwise operations):
$$R\bowtie E:=\{(r,r+e)\mid  r\in R\ \text{ and } e\in E\}.$$

It is obvious that, if in the $R$-module $R\oplus E$ we introduce a
multiplicative structure by setting $(r,e)(s, f):=(rs, rf+se+ef)$,
where $r,s\in R$ and $e,f\in E$, then we get the ring isomorphism
$R\bowtie E\cong R\oplus E$. When $E^2=0$, this new construction
coincides with the \emph{Nagata's idealization}. One main difference
between  this  constructions, with respect to the idealization (or
with respect to any commutative extension, in the sense of Fossum)
is that the ring $R\bowtie E$ can be a reduced ring and it is always
reduced if R is a domain (see \cite{D'Anna 0,D'Anna basic}). If
$E=I$ is an ideal in $R$, then the ring $R\bowtie I$ is a subring of
$R\times R$. This extension has been studied, in the general case,
and from the different point of view of pullbacks, by D'Anna and
Fontana \cite{D'Anna basic}. As it happens for the idealization, one
interesting application of this construction is the fact that it
allows to produce rings satisfying (or not satisfying) preassigned
conditions. Recently, D'Anna proved that, if $R$ is a local
Cohen-Macaulay ring with canonical module $\omega_R$, then $R\bowtie
I$ is a Gorenstein ring if and only if $I\cong \omega_R$ (see
\cite{D'Anna 0}). Note also that this construction has already been
applied, by Maimani and Yassemi for studying questions concerning
the diameter and girth of the zero-divisor graph of a ring (see
\cite{Yassemi}).\\
Recently in \cite{chhitimahdou}, the authors study some homological
properties and
coherence of the \emph{amalgamated duplication} of a ring along an ideal.\\
The main result of this paper is stated as follows:
\begin{theorem}\label{main result} Let $R$ be a ring and $I$ a proper ideal of $R$. Then, $R\bowtie I$ is quasi-Frobenius if, and only if, $R$ is quasi-Frobenius and $I=Ra$ where $a$ is an idempotent element of $R$.
\end{theorem}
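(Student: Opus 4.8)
The plan is to reduce to the case of a local Artinian ring and there to compute the socle of $R\bowtie I$ explicitly. In both directions one may first assume that $R$ is Artinian. Indeed, the first projection $(r,r+i)\mapsto r$ is a surjective ring homomorphism $R\bowtie I\to R$ with kernel $0\times I$; hence if $R\bowtie I$ is quasi-Frobenius, and therefore Artinian, then $R$ is a quotient of an Artinian ring and so is itself Artinian. In the converse direction $R$ is assumed quasi-Frobenius, hence Artinian as well.

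Next I would exploit that the construction commutes with finite products. Writing the Artinian ring as a finite product $R=\prod_i R_i$ of local Artinian rings and $I=\prod_i I_i$ accordingly, one checks straight from the definition that $R\bowtie I\cong\prod_i(R_i\bowtie I_i)$. Since a finite product of rings is quasi-Frobenius precisely when each factor is, and since an ideal $I$ of an Artinian ring is generated by an idempotent exactly when each $I_i$ equals $0$ or $R_i$, both implications reduce to the local case. When $I_i=R_i$ one has $R_i\bowtie R_i\cong R_i\times R_i$, which is quasi-Frobenius iff $R_i$ is and whose defining ideal is generated by the idempotent $1$; so the decisive case is that of a proper ideal $I\subseteq\mathcal{M}$ of a local Artinian ring $(R,\mathcal{M})$.

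For such $R$ and $I$ I would first check that $S:=R\bowtie I$ is again local Artinian: its non-units are exactly the elements $(r,r+i)$ with $r\in\mathcal{M}$, the inverse of a unit being readily seen to return to $S$ because $i\in I$. Consequently $S$ is quasi-Frobenius iff it is Gorenstein iff $\mathrm{Soc}(S)$ is one-dimensional over $k=R/\mathcal{M}$. The key computation is that, with $\sigma:=\mathrm{Soc}(R)=\mathrm{ann}_R(\mathcal{M})$, one has $\mathrm{Soc}(S)=\{(r,r+i)\in S:\ r\in\sigma\ \text{and}\ r+i\in\sigma\}$, which as a $k$-vector space is isomorphic to $\sigma\oplus(\sigma\cap I)$; thus $\dim_k\mathrm{Soc}(S)=\dim_k\sigma+\dim_k(\sigma\cap I)$. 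Since $\dim_k\sigma\ge1$, simplicity of $\mathrm{Soc}(S)$ forces at once $\dim_k\sigma=1$, i.e. $R$ is quasi-Frobenius, and $\sigma\cap I=0$. Because the socle is essential in the Artinian module $R$, the condition $\sigma\cap I=0$ forces $I=0$; conversely $I=0$ gives $S\cong R$. Reassembling the factors, $R\bowtie I$ is quasi-Frobenius iff every $R_i$ is quasi-Frobenius and every $I_i\in\{0,R_i\}$, that is, iff $R$ is quasi-Frobenius and $I=Ra$ with $a$ an idempotent.

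The main obstacle is exactly this local socle computation together with the essentiality step; everything else—the projection argument, the compatibility with finite products, and the standard facts that the quasi-Frobenius property passes to finite products and to direct factors and that a local Artinian ring is quasi-Frobenius iff its socle is simple—is routine. The point requiring the most care is that the twisted second coordinate $r+i$ makes the two annihilator conditions $r\in\sigma$ and $r+i\in\sigma$ genuinely independent, producing the extra summand $\sigma\cap I$; it is precisely this summand that destroys the Frobenius property whenever $I$ is a nonzero proper ideal, and tracking it correctly is the crux of the argument.
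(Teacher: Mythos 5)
Your proof is correct, and the overall architecture matches the paper's: both arguments reduce to the local Artinian case via the decomposition $R\cong\prod R_i$ into local factors, the compatibility $R\bowtie I\cong\prod(R_i\bowtie I_i)$, and the fact that quasi-Frobenius passes to and from finite products. Where you genuinely diverge is in the decisive local lemma. The paper shows that if $(R,\mathcal{M})$ is local and $R\bowtie I$ is quasi-Frobenius with $I\neq 0$ then $I=R$, by invoking D'Anna's isomorphism $I\cong_R\mathrm{Hom}_{R\bowtie I}(R,R\bowtie I)$ to see that $I$ is injective, hence projective (over a QF ring), hence free of rank one on a nonzerodivisor $x$, and then using the Artinian chain $\cdots\subseteq x^2R\subseteq xR$ to conclude $x$ is a unit; it also needs a separate double-annihilator argument (its Lemma 2.2) to get quasi-Frobeniusness of $R$ in the first place. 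You instead verify that $R\bowtie I$ is local with residue field $k$ and compute $\dim_k\mathrm{Soc}(R\bowtie I)=\dim_k\sigma+\dim_k(\sigma\cap I)$ with $\sigma=\mathrm{ann}_R(\mathcal{M})$, so that simplicity of the socle simultaneously forces $\sigma$ simple (i.e. $R$ Gorenstein/QF) and $\sigma\cap I=0$, whence $I=0$ by essentiality of the socle; this computation checks out (an element $(r,r+j)$ kills $\mathcal{M}\bowtie I$ iff $r\in\sigma$ and $j\in\sigma\cap I$, since $I\subseteq\mathcal{M}$ makes the extra condition $r+j\in\mathrm{ann}_R(I)$ automatic). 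Your route is more elementary and self-contained -- it bypasses both the Hom-module identification and the paper's Lemma 2.2, and it quantifies the failure of the Gorenstein property by the defect $\dim_k(\sigma\cap I)$ -- at the price of relying on the commutative-local characterization ``quasi-Frobenius $\Leftrightarrow$ simple socle,'' whereas the paper works entirely through the module-theoretic criteria of Nicholson--Yousif and so stays closer to the general (not specifically commutative-local) theory.
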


\section{proof of Main result}
To proof this Theorem  we need some Lemmas:
\begin{lemma}\label{lemma1} A direct product $\displaystyle\prod_{i=1}^nR_i$ of rings is  quasi-Frobenius if, and only if, each factor $R_i$ is quasi-Frobenius.
\end{lemma}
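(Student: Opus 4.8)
The plan is to prove Lemma \ref{lemma1} by reducing to the case $n=2$ and then invoking the standard characterization of quasi-Frobenius rings. Recall that a (commutative) ring is quasi-Frobenius precisely when it is Noetherian and self-injective, equivalently when it is Artinian and self-injective. I would work with whichever of these characterizations interacts most cleanly with finite products, and I expect the self-injective condition to be the step requiring the most care.

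First I would reduce to $n=2$ by induction, using the ring isomorphism $\prod_{i=1}^{n}R_i \cong R_1 \times \prod_{i=2}^{n}R_i$, so it suffices to show that $A \times B$ is quasi-Frobenius if and only if both $A$ and $B$ are. Next I would handle the two easy transfers. Being Noetherian (or Artinian) passes both ways through finite products, since ideals of $A\times B$ are exactly products $J_A \times J_B$ of ideals, so the ascending (respectively descending) chain condition on $A\times B$ holds if and only if it holds on each factor. This disposes of the ring-theoretic finiteness half of the definition in both directions simultaneously.

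The substantive part is the self-injectivity transfer, which I would carry out using the idempotents $e=(1,0)$ and $1-e=(0,1)$ that split $A\times B$. Every $(A\times B)$-module $M$ decomposes canonically as $M \cong eM \oplus (1-e)M$, where $eM$ is naturally an $A$-module and $(1-e)M$ a $B$-module, and a short computation shows $\Hom_{A\times B}(M,N) \cong \Hom_A(eM,eN)\oplus \Hom_B((1-e)M,(1-e)N)$. Applying this with $N = A\times B$ itself, whose two idempotent pieces are $A$ and $B$, I would argue that the functor $\Hom_{A\times B}(-,A\times B)$ is exact if and only if each of $\Hom_A(-,A)$ and $\Hom_B(-,B)$ is exact. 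Thus $A\times B$ is self-injective as a module over itself precisely when $A$ is $A$-injective and $B$ is $B$-injective, which is exactly what is needed.

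The main obstacle I anticipate is making the decomposition of modules and $\Hom$-groups over $A\times B$ fully rigorous and checking that it is compatible with exact sequences, so that injectivity really does split into the two factor conditions; once that bookkeeping is in place the equivalence follows by combining it with the finiteness transfer. I would then assemble the pieces: $A\times B$ is Noetherian and self-injective if and only if each factor is Noetherian and self-injective, which by the characterization of quasi-Frobenius rings is the assertion for $n=2$, and the induction completes the general case.
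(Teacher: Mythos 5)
Your proof is correct, but it takes a genuinely different route from the paper's. The paper disposes of this lemma in one line by citing two external results on Gorenstein homological dimensions: a characterization of quasi-Frobenius rings in terms of Gorenstein global dimension (namely, the quasi-Frobenius rings are exactly those of Gorenstein global dimension $0$) together with the theorem that the Gorenstein global dimension of a finite direct product is the supremum of the dimensions of the factors. You instead give a self-contained elementary argument from the classical characterization of quasi-Frobenius rings as Noetherian self-injective rings, transferring the Noetherian condition through the identification of ideals of $A\times B$ with products of ideals, and transferring self-injectivity via the idempotent decomposition $M\cong eM\oplus(1-e)M$ and the corresponding splitting of $\Hom$. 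The one point worth spelling out in your "only if" direction is that an arbitrary short exact sequence of $A$-modules becomes a short exact sequence of $A\times B$-modules by restriction of scalars along the projection, and that any $A\times B$-linear extension $N\to A\times B$ of a map out of an $A$-module automatically lands in the summand $A\times 0$, since $N$ is annihilated by $0\times B$; with that observation your splitting argument is airtight. Your approach buys independence from the Gorenstein-dimension machinery and makes the paper self-contained at the cost of a page of bookkeeping; the paper's citation is shorter but leans on two nontrivial external theorems whose own proofs are considerably heavier than what the lemma actually requires.
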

\begin{proof}Follows directly from  \cite[Proposition 2.6]{Bennis and Mahdou2} and \cite[Theorem 3.1]{Bennis and Mahdou3}.
\end{proof}
\begin{lemma}\label{implication 1}
Let $R$ be a ring and $I$ be a nonzero ideal of $R$. If $R\bowtie
I$ is quasi-Frobenius, then so is $R$.
\end{lemma}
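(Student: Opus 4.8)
The plan is to use the standard characterization that a (commutative) ring is quasi-Frobenius precisely when it is Noetherian and self-injective, and to split the statement into these two halves. Write $S=R\bowtie I$ and consider the projection $\pi\colon S\to R$, $(r,r+i)\mapsto r$, which is a surjective ring homomorphism with kernel $\mathfrak a=\{(0,i):i\in I\}\cong I$, so that $R\cong S/\mathfrak a$. Moreover the diagonal $\Delta\colon r\mapsto(r,r)$ is a ring section of $\pi$, and at the level of $R$-modules one has the splitting $S\cong R\oplus I$; thus $R$ is simultaneously a quotient ring, a ring retract, and an $R$-module direct summand of $S$. I will draw on each of these descriptions.

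The Noetherian (indeed Artinian) half is immediate: since $S$ is quasi-Frobenius it is Noetherian, and $R\cong S/\mathfrak a$ is a quotient ring of $S$, hence Noetherian as well; as $S$ is Artinian the same argument gives $R$ Artinian. In particular $I$ is then a finitely generated $R$-module and $S$ is module-finite over $R$, which I would want later in order to apply structure theory for Artinian rings. By the Artinian structure theorem I would reduce to the local factors of $R$ and plan to reassemble them at the end via Lemma~\ref{lemma1}.

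The substance of the lemma is self-injectivity of $R$, and this is where I expect the main obstacle. The naive attempt---to deduce that $R$ is an injective $R$-module directly from the $R$-module splitting $S\cong R\oplus I$---does not suffice, because $S$ is injective over itself but need not remain injective after restriction of scalars along $\Delta$ (equivalently, $S$ is not $R$-flat and $\mathfrak a$ is not a ring direct factor, since $1\in I$ would force $I=R$); in fact one checks that the assertion ``$S$ is $R$-injective'' is essentially equivalent to the desired conclusion, so the splitting alone cannot carry the argument. Instead I would exploit the self-injectivity of $S$ \emph{over} $S$ through the double-annihilator structure of a quasi-Frobenius ring, computing $\mathrm{ann}_S(\mathfrak a)=\{(r,r+i): r+i\in(0:_R I)\}$ and using $\Hom_S(S/\mathfrak a,S)\cong \mathrm{ann}_S(\mathfrak a)$ to control the injective $R$-modules. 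Concretely, having passed to an Artinian local factor $(R',m',k')$, I would verify the socle criterion---that $R'$ is quasi-Frobenius exactly when $\dim_{k'}\mathrm{soc}(R')=1$---by transporting the relevant socle and annihilator data from $S$ through $\pi$. The delicate point throughout is precisely that self-injectivity does not pass freely to quotients or to ring retracts, so the proof must use the full annihilator information carried by the quasi-Frobenius ring $S$ rather than any formal functorial transfer.
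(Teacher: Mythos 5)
Your Noetherian/Artinian half is fine, and you correctly diagnose the crux: self-injectivity of $S=R\bowtie I$ does not descend formally to the quotient $R\cong S/\mathfrak a$ or to the retract $\Delta(R)$. But at exactly that point the proposal stops being a proof. You compute $\mathrm{Ann}_S(\mathfrak a)$ and record $\Hom_S(S/\mathfrak a,S)\cong\mathrm{Ann}_S(\mathfrak a)$, but this information is never actually used for anything; the phrase ``transporting the relevant socle and annihilator data from $S$ through $\pi$'' is the entire content of the hard step, and it is not carried out. Concretely, for the socle route you would need to show that for a local Artinian factor $(R',m')$ the diagonal sends $\mathrm{soc}(R')$ into $\mathrm{soc}(R'\bowtie I')$ (true, but only because $I'\subseteq m'$, so it requires knowing $I'$ is proper), you would need a separate argument for the factors where $I'=R'$ (there $R'\bowtie R'=R'\times R'$ is not local, so the socle criterion does not even apply), and you would need the product decomposition $S\cong\prod(R_i'\bowtie I_i')$ together with ``a finite product is QF iff each factor is.'' None of this case analysis appears, and without it the argument does not close.

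The paper's proof avoids the local reduction entirely and is worth comparing against. It uses the double-annihilator characterization of quasi-Frobenius (Noetherian plus $\mathrm{Ann}(\mathrm{Ann}(\mathcal J))=\mathcal J$ for every ideal $\mathcal J$) applied not to the single ideal $\mathfrak a=0\times I$ but to the family of ideals $J\bowtie I=\{(j,j+i)\}$ of $S$, one for each ideal $J$ of $R$. A direct computation shows that $(r,r+i)\in\mathrm{Ann}_S(J\bowtie I)$ forces $r,i\in\mathrm{Ann}_R(J)$, whence any $x\in\mathrm{Ann}_R(\mathrm{Ann}_R(J))$ satisfies $(x,x)\in\mathrm{Ann}_S(\mathrm{Ann}_S(J\bowtie I))=J\bowtie I$ and so $x\in J$. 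That two-line computation, quantified over all ideals $J$ of $R$, is the missing idea: your proposal tests the annihilator condition only at $\mathfrak a$, which cannot suffice, since the QF property of $R$ is a statement about all of its ideals. To repair your write-up, either adopt the $J\bowtie I$ device, or fully execute the local reduction with the $I'$ proper versus $I'=R'$ dichotomy made explicit.
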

\begin{proof} Suppose that $R\bowtie
I$ is quasi-Frobenius. If $I=R$, we have  $R\bowtie R=R\times R$.
Thus, from Lemma \ref{lemma1}, $R$ is quasi-Frobenius. So, we may
assume that $I$ is a proper ideal. From \cite[Theorem
1.50]{Nicolson}, $R\bowtie I$ is Noetherian and $Ann_{R\bowtie
I}(Ann_{R\bowtie I}(\mathcal{J}))=\mathcal{J}$ for every ideal
$\mathcal{J}$ of $R\bowtie I$ where $Ann_{R\bowtie I}(-)$ means
the annihilator over $R\bowtie I$. Then, from \cite[Proposition
2.1(4)]{D'Anna}, $R$ is
 also Noetherian. Now, let $J$ be an ideal of $R$. We claim that
$Ann_R(Ann_R(J))=J$. The inclusion $J\subseteq Ann_R(Ann_R(J))$ is
clear, so we have to prove the converse inclusion. The set $J\bowtie
I:=\{(j,j+i)\mid  j\in J \text{ and } i\in I\}$ is an ideal of $R\bowtie
I$. Thus,   $Ann_{R\bowtie I}(Ann_{R\bowtie
I}(J\bowtie I))=J\bowtie I$. Let $(r,r+i)\in Ann_{R\bowtie
I}(J\bowtie I)$. Then, for all $j\in J$ we have
$(rj,rj+ij)=(r,r+i)(j,j)=(0,0)$ since $(j,j)\in J\bowtie I$.
Therefore, $r,i\in Ann_R(J)$. So, if  $x\in Ann_R(Ann_R(J))$ we have
$(x,x)(r,r+i)=(xr,xr+xi)=(0,0)$. Then, $(x,x)\in Ann_{R\bowtie
I}(Ann_{R\bowtie I}(J\bowtie I))=J\bowtie I$. So, $x\in J$ as
desired. Consequently, from \cite[Theorem 1.50]{Nicolson}, $R$ is a
quasi-Frobenius ring.
\end{proof}
\begin{lemma}\label{qf local regular elemnt}
Let $R$ be a local ring and $I$ be a nonzero ideal of $R$. If
$R\bowtie I$ is quasi-Frobenius then, $R$ is quasi-Frobenius and
$I=R$.
\end{lemma}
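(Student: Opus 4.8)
The plan is to deduce first from Lemma \ref{implication 1} that $R$ is quasi-Frobenius (this is immediate, since $I\neq 0$), so that only the equality $I=R$ requires work. I would argue by contradiction, assuming that $I$ is a proper ideal; since $R$ is local with maximal ideal $\mathfrak m$, this forces $I\subseteq\mathfrak m$. The first step is to check that under this hypothesis $R\bowtie I$ is itself local, with maximal ideal $\mathfrak m\bowtie I$ and residue field $R/\mathfrak m$. Indeed, an element $(r,r+e)$ with $r\notin\mathfrak m$ and $e\in I$ has both components outside $\mathfrak m$, hence is invertible in $R\times R$, and a short computation shows that its inverse $(r^{-1},(r+e)^{-1})$ again lies in $R\bowtie I$ because $(r+e)^{-1}-r^{-1}=-e\,(r(r+e))^{-1}\in I$; conversely every element of $\mathfrak m\bowtie I$ is a non-unit, so $\mathfrak m\bowtie I$ is the unique maximal ideal.

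Next I would compute the socle of $R\bowtie I$, that is, $Ann_{R\bowtie I}(\mathfrak m\bowtie I)$. Writing an arbitrary element as $(x,x+e)$ with $x\in R$ and $e\in I$, the condition $(x,x+e)(m,m+i)=(0,0)$ for all $m\in\mathfrak m$ and $i\in I$ splits into $xm=0$ and $(x+e)(m+i)=0$. Since $I\subseteq\mathfrak m$, the elements $m+i$ range over all of $\mathfrak m$, so these conditions say exactly that $x\in Ann_R(\mathfrak m)$ and $x+e\in Ann_R(\mathfrak m)$, whence also $e\in I\cap Ann_R(\mathfrak m)$. This identifies the socle of $R\bowtie I$ with $Ann_R(\mathfrak m)\bowtie\big(I\cap Ann_R(\mathfrak m)\big)$, a vector space over $k:=R/\mathfrak m$ of dimension $\dim_k Ann_R(\mathfrak m)+\dim_k\big(I\cap Ann_R(\mathfrak m)\big)$.

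Finally I would invoke the structure of commutative quasi-Frobenius local rings: such a ring has simple socle, i.e.\ $\dim_k Ann_R(\mathfrak m)=1$, and its unique minimal ideal $Ann_R(\mathfrak m)$ is contained in every nonzero ideal. Applied to $R$, this gives $I\cap Ann_R(\mathfrak m)=Ann_R(\mathfrak m)$, so the socle of $R\bowtie I$ has dimension $1+1=2$. On the other hand $R\bowtie I$ is, by hypothesis and by the first step, a local quasi-Frobenius ring, so its socle must be simple of dimension $1$, a contradiction. Hence $I$ cannot be proper, and $I=R$. The main obstacle I anticipate is the socle computation of the second step (keeping careful track of the two components and of the hypothesis $I\subseteq\mathfrak m$), together with the justification that a local quasi-Frobenius ring is characterized by having a one-dimensional socle, which is precisely where the full force of the quasi-Frobenius hypothesis enters.
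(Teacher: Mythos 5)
Your argument is correct, but it follows a genuinely different route from the paper's. The paper deduces from the $R$-module isomorphism $I\cong_R\Hom_{R\bowtie I}(R,R\bowtie I)$ (D'Anna) and the self-injectivity of $R\bowtie I$ that $I$ is an injective, hence projective, hence free $R$-module, so $I=xR$ with $x$ a nonzero divisor; it then uses the Artinian property of $R$ to make the chain $x^nR$ stabilize and conclude that $x$ is a unit. You instead argue by contradiction via a direct socle count: after checking that $R\bowtie I$ is local with maximal ideal $\mathfrak m\bowtie I$ when $I\subseteq\mathfrak m$, you identify its socle with $Ann_R(\mathfrak m)\bowtie\bigl(I\cap Ann_R(\mathfrak m)\bigr)$, and since the socle of the local quasi-Frobenius ring $R$ is simple and contained in every nonzero ideal, this socle has $k$-dimension $2$, contradicting the simplicity of the socle of the local quasi-Frobenius ring $R\bowtie I$. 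All the steps check out: the inverse formula $(r+e)^{-1}-r^{-1}=-e\,(r(r+e))^{-1}\in I$ is right, the socle computation is correct (taking $i=0$ shows $m+i$ sweeps out all of $\mathfrak m$), and the facts you invoke about commutative Artinian local quasi-Frobenius rings (simple socle, socle contained in every nonzero ideal) are standard. What each approach buys: the paper's proof is shorter given the cited machinery (the D'Anna isomorphism and the injective--projective coincidence over quasi-Frobenius rings), while yours is more self-contained, avoids the external isomorphism entirely, and makes visible the precise obstruction --- a proper nonzero $I$ doubles the socle --- which is the same mechanism underlying D'Anna's Gorenstein criterion $I\cong\omega_R$ mentioned in the introduction.
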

\begin{proof} Suppose that $R\bowtie I$ is a quasi-Frobenius ring.
Then, by Lemma \ref{implication 1}, $R$ is also  quasi-Frobenius.
Moreover,  from the isomorphism of $R$-modules
$I\cong_RHom_{R\bowtie I}(R,R\bowtie I)$ (\cite[Proposition
3]{D'Anna 0}), we deduce that  $I$ is an injective $R$-module
since $R\bowtie I$ is self injective (From \cite[Theorem
1.50]{Nicolson}). Thus, $I$ is projective since $R$ is
quasi-Frobenius and from \cite[Theorem 7.56]{Nicolson}. Hence,
there exists a nonzero divisor element $x\in R$ such that $I=xR$
 since $R$ is local. We have  the following descendent chain of
ideals:
$$....\subseteq x^3R\subseteq x^2R \subseteq xR$$
Since $R$ is Artinian (from \cite[Theorems 1.50]{Nicolson}), this
chain is finite an so there is an integer $n$ such that
$x^{n+1}R=x^nR$. Then, there is an element $a\in R-\{0\}$ such
that $x^n=x^{n+1}a$ and so $x^n(1-xa)=0$. Thus, $1=xa$ since $x$
is a nonzero divisor element of $R$. Consequently, $x$ is a unit
element and $I=R$.
\end{proof}
 Using the above Lemma we have the following direct Corollary:
\begin{corollary} If $R$ is local ring and $I$ a nonzero proper ideal of $R$. Then, $R\bowtie I$ is never quasi-Frobenius.
\end{corollary}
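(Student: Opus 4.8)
The plan is to obtain this as an immediate consequence of Lemma~\ref{qf local regular elemnt} by contraposition. That lemma requires only that $R$ be local and that $I$ be a nonzero ideal, and both hypotheses are granted in the statement of the corollary; the single additional assumption here is that $I$ be \emph{proper}.

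First I would invoke Lemma~\ref{qf local regular elemnt} directly: under the standing hypotheses that $R$ is local and $I$ is nonzero, were $R\bowtie I$ quasi-Frobenius, the lemma would force $I=R$. But the corollary assumes $I$ is a proper ideal, so $I\neq R$, and this contradiction shows that $R\bowtie I$ cannot be quasi-Frobenius. Equivalently, the corollary is simply the contrapositive of the lemma once the conclusion ``$R$ is quasi-Frobenius and $I=R$'' is read as ``in particular $I=R$.''

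I expect no genuine obstacle here, since all the substantive work has already been carried out: the chain-of-ideals argument proving $I=R$ lives inside Lemma~\ref{qf local regular elemnt}, which in turn rests on Lemma~\ref{implication 1} and Lemma~\ref{lemma1}. The only point deserving a word of care is that both halves of the hypothesis ``$I$ nonzero proper'' play a role: nonzeroness is exactly what permits the application of Lemma~\ref{qf local regular elemnt}, while properness is what supplies the contradiction with its conclusion $I=R$. Spelling out that both clauses are used is, in my view, the whole content of the write-up.
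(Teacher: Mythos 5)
Your argument is correct and coincides with the paper's: the authors also present this corollary as an immediate consequence of Lemma~\ref{qf local regular elemnt}, since that lemma forces $I=R$ while properness gives $I\neq R$. Nothing further is needed.
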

\begin{lemma}\label{division}
Let $\{R_{i}\}_{1\leq i\leq n}$ be a family of rings. For each $1\leq
i\leq n$, let $I_i$ be   an ideal of $R_i$. Then, there is a naturel
isomorphism of rings  $$\prod_{i=1}^n(R_i\bowtie I_i)\cong
(\prod_{i=1}^nR_i)\bowtie (\prod_{i=1}^nI_i)$$
\end{lemma}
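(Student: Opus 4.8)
The plan is to realize the isomorphism explicitly by a rearrangement of coordinates and then check that this rearrangement restricts correctly to the two subrings in question. First I would record the clean description of the duplication as a subring of the square: since $(r,r+e)$ ranges precisely over the pairs whose two coordinates differ by an element of $I_i$, we have
$$R_i\bowtie I_i=\{(x,y)\in R_i\times R_i\mid y-x\in I_i\}.$$
The same description applies verbatim to $(\prod_{i=1}^nR_i)\bowtie(\prod_{i=1}^nI_i)$, with $\prod_{i=1}^nI_i$ regarded as an ideal of $\prod_{i=1}^nR_i$.

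Next I would write down the obvious ``shuffle'' map. There is a canonical ring isomorphism
$$\Phi:\prod_{i=1}^n(R_i\times R_i)\longrightarrow\Big(\prod_{i=1}^nR_i\Big)\times\Big(\prod_{i=1}^nR_i\Big)$$
sending $((x_1,y_1),\dots,(x_n,y_n))$ to $((x_1,\dots,x_n),(y_1,\dots,y_n))$. It is plainly bijective, and since both sides carry componentwise addition and multiplication, $\Phi$ preserves both operations and the identity; hence $\Phi$ is an isomorphism of rings. Note that $\prod_{i=1}^n(R_i\bowtie I_i)$ is a subring of the left-hand side and $(\prod_i R_i)\bowtie(\prod_i I_i)$ is a subring of the right-hand side.

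The last step is to verify that $\Phi$ carries the first subring onto the second. Using the description above, $((x_1,y_1),\dots,(x_n,y_n))$ belongs to $\prod_{i=1}^n(R_i\bowtie I_i)$ exactly when $y_i-x_i\in I_i$ for every $i$; and its image belongs to $(\prod_iR_i)\bowtie(\prod_iI_i)$ exactly when $(y_1,\dots,y_n)-(x_1,\dots,x_n)\in\prod_{i=1}^nI_i$, that is, again precisely when $y_i-x_i\in I_i$ for every $i$. The two membership conditions coincide, so $\Phi$ restricts to a bijection between the two subrings and therefore to the asserted ring isomorphism.

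I do not expect a genuine obstacle here: the entire content is the bookkeeping identification of $R\bowtie I$ with the ``difference lies in $I$'' subring of $R\times R$, after which the statement is a mere permutation of coordinates. The only points deserving a line of care are confirming that $\prod_{i=1}^nI_i$ is indeed an ideal of $\prod_{i=1}^nR_i$ (so that the right-hand side is defined at all) and checking that the subring condition transfers componentwise under $\Phi$, which is immediate from the displayed descriptions.
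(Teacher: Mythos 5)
Your proof is correct and is essentially the paper's argument: the paper exhibits the same coordinate-shuffle map (written for $n=2$ and extended by induction), whereas you define it for general $n$ at once and justify the restriction via the description of $R\bowtie I$ as the pairs whose coordinates differ by an element of $I$. The extra care you take in checking that the subring conditions match under the shuffle is exactly the verification the paper leaves implicit.
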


\begin{proof} The proof is done by induction on $n$ and it
suffices to check it for $n=2$. \\
But, it is clear that the map: \\
$$\begin{array}{cccc}
  \zeta: & (R_1\bowtie I_1)\times (R_2\bowtie
I_2) & \rightarrow & (R_1\times R_2)\bowtie (I_1\times I_2) \\
   & ((r_1,r_1+i_1),(r_2,r_2+i_2)) & \mapsto & ((r_1,r_2),(r_1,r_2)+(i_1,i_2))
\end{array}$$
is an isomorphism of rings, as desired.
\end{proof}
\begin{lemma}\label{astuce}
Let $R$ be a ring and $I$, $J$ be a two ideals of $R$ such that
$I\cap J =\{0\}$ and $I+J=R$. Then, we have the following
isomorphism of rings $R\bowtie I\cong R/I\times (R/J)^2$ and
$R\cong R/I\times R/J$.
\end{lemma}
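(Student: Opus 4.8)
The plan is to deduce both isomorphisms from the two hypotheses $I\cap J=\{0\}$ and $I+J=R$, exploiting the Chinese Remainder Theorem together with Lemma \ref{division}. Since the first asserted isomorphism will be read off from a product decomposition of $R$, I would establish the second isomorphism first. Because $I$ and $J$ are comaximal ($I+J=R$) and $I\cap J=\{0\}$, the Chinese Remainder Theorem gives $R=R/(I\cap J)\cong R/I\times R/J$ via $r\mapsto(r+I,r+J)$; this is exactly the claimed isomorphism $R\cong R/I\times R/J$. The crucial point for the other isomorphism is to track where the ideal $I$ goes under this identification. Writing $A:=R/I$ and $B:=R/J$, I claim that $I$ corresponds to the ideal $\{0\}\times B$ of $A\times B$: indeed $i\mapsto(0,i+J)$ for $i\in I$, and the assignment $i\mapsto i+J$ is surjective onto $B$, because $I+J=R$ forces every residue class modulo $J$ to be represented by an element of $I$ (write $r=i+j$, so $r+J=i+J$).

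With this identification in hand, the first isomorphism follows formally from Lemma \ref{division}. Under $R\cong A\times B$ and $I\cong\{0\}_A\times B$ (where $\{0\}_A$ is the zero ideal of $A$ and $B$ denotes the whole ring $B$ regarded as an ideal of itself), Lemma \ref{division} yields $R\bowtie I\cong(A\bowtie\{0\}_A)\times(B\bowtie B)$. It then remains to evaluate the two factors by inspection: $A\bowtie\{0\}_A=\{(a,a)\mid a\in A\}\cong A=R/I$, and $B\bowtie B=\{(b,b+b')\mid b,b'\in B\}=B\times B=(R/J)^2$, this last being the observation already used in the proof of Lemma \ref{implication 1} that $S\bowtie S=S\times S$. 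Combining the factors gives $R\bowtie I\cong R/I\times(R/J)^2$, as desired.

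I expect the step requiring the most care to be the identification of the image of $I$ under the Chinese Remainder isomorphism, since this is precisely where both hypotheses are used: $I\cap J=\{0\}$ guarantees that $R$ itself (not merely a proper quotient) splits as $R/I\times R/J$, while $I+J=R$ is what forces $I$ to surject onto the whole factor $B=R/J$, and hence to correspond to $\{0\}_A\times B$ rather than to some smaller ideal of the product. Once the ideal $I$ is correctly located inside $A\times B$, everything else reduces to the routine application of Lemma \ref{division} and the two elementary computations $S\bowtie\{0\}\cong S$ and $S\bowtie S\cong S\times S$.
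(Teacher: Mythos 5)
Your proof is correct, but it follows a genuinely different route from the paper's. The paper argues entirely by hand: it picks $i_0\in I$, $j_0\in J$ with $1=i_0+j_0$ and writes down the explicit maps $\psi(r,r+i)=(\overline{r},(\widehat{r},\widehat{r+i}))$ and $\phi(r)=(\overline{r},\widehat{r})$, checking injectivity from $I\cap J=\{0\}$ and surjectivity by exhibiting explicit preimages built from $i_0$ and $j_0$. You instead obtain $R\cong R/I\times R/J$ from the Chinese Remainder Theorem, locate the image of $I$ as $\{0\}\times R/J$ (correctly using $I+J=R$ to see that $I$ surjects onto $R/J$), and then invoke Lemma \ref{division} together with the two degenerate computations $S\bowtie\{0\}\cong S$ and $S\bowtie S=S\times S$. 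Your structural argument is arguably more illuminating --- it explains \emph{why} the factor $(R/J)^2$ appears (it is the $\bowtie$ of the factor onto which $I$ surjects with itself) and it reuses Lemma \ref{division}, which the paper has already proved for the main theorem. What the paper's direct computation buys is self-containedness and an explicit formula for the isomorphism; it also sidesteps the one small step you leave implicit, namely that a ring isomorphism carrying an ideal $I$ onto an ideal $I'$ induces an isomorphism of the corresponding amalgamated duplications. That step is routine ($(r,r+i)\mapsto(\phi(r),\phi(r)+\phi(i))$), but you should state it, since Lemma \ref{division} as written applies to $(\prod R_i)\bowtie(\prod I_i)$ literally, not to a ring merely isomorphic to such a product.
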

\begin{proof}
The proof will be elementary. Since $I+J=R$,  let $i_0\in I$ and
$j_0\in J$ such that $1=i_0+j_0$. To see the first isomorphism we
consider the following map
$$\begin{array}{cccc}
  \psi: & R\bowtie I & \rightarrow & R/I\times (R/J)^2 \\
  & (r,r+i) & \mapsto & (\overline{r},(\widehat{r},\widehat{r+i}))
\end{array}$$
It is easy to see that $\psi$ is an homomorphism of rings.
Moreover, if $\psi(r,r+i)=0$, we will have $r,i\in I\cap J
=\{0\}$. On the other hand, if
$(\overline{r},(\widehat{r'},\widehat{r"}))$ is an element of $R/I
\times (R/J)^2$, we have that
$(\overline{r},(\widehat{r'},\widehat{r"}))=\psi((rj_0+r'i_0,rj_0+r'i_0+r"i_0))$.
Hence, $\psi$ is an isomorphism of rings.\\
Also, it is easy to check that
$$\begin{array}{cccc}
  \phi: & R & \rightarrow & R/I\times R/J \\
  & r & \mapsto & (\overline{r},\widehat{r})
\end{array}$$
is an homomorphism of rings which is injective (since
$(\overline{r},\widehat{r})=0$ implies that $r\in  I\cap J
=\{0\}$) and surjectif (since for every $r,r'\in R$, we have
$(\overline{r},\widehat{r'})=\phi(j_0r+i_0r')$). Hence, $\phi$ is
an isomorphism of rings.
\end{proof}
\begin{remark}
 A particular case of Lemma \ref{astuce}, is by taken $I=eR$ where $e$ is an idempotent element and $J=(1-e)R$. Clearly, $I+J=R$ and $I\cap J=0$.
 \end{remark}
\begin{proof}[Proof of Theorem \ref{main result}] Assume that
$R\bowtie I$ is quasi-Frobenius. Then, by Lemma  \ref{implication 1}, $R$ is quasi-Frobenius. Thus, from \cite[Proposition 1.7]{Ouarghi},
$R=\displaystyle\prod_{i=1}^n R_i$ where $R_i$ is local quasi-Frobenius. Hence, $I$
has the form  $I=\displaystyle
\prod_{i=1}^nI_i$ where $I_i$
is an ideal of $R_i$ for each $1\leq i\leq n$. Thus, by Lemma \ref{division}, $R\bowtie
I\cong \displaystyle\prod_{i=1}^nR_i\bowtie I_i$. Hence, by Lemma \ref{lemma1}, $R_i\bowtie I_i$ is quasi-Frobenius for each $i$. Then, by
Lemma \ref{qf local regular elemnt},  $I_i=R_i$ or $I_i=0$. Hence, $I=(a_1,...,a_n)R$ where $a_i\in \{0,1\}$.
 But since $I$ is a proper ideal $(a_1,...,a_n)\neq 0$. Clearly, $(a_1,...,a_n)$ is an idempotent element of $R$, as desired.\\
Conversely, suppose that $R$ is a quasi-Frobenius ring and let $a$ be an  idempotent element of $R$. Then, by lemma \ref{astuce}, $R\cong R/(a)\times R/(1-a)$. Thus, $R/(a)$ and $R/(1-a)$ are quasi-Frobenius and so is $R\bowtie I\cong R/(a)\times (R/(1-a))^2$, as desired.
\end{proof}

\begin{proposition}\label{idemp}
Let $R$ be a ring an $I$ a proper ideal of $R$. Then, $R$ is
projective as an $R\bowtie I$-module if, and only if, $I=Re$ where
$e$ is an idempotent element of $R$.
\end{proposition}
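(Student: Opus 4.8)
The plan is to realize $R$ as a cyclic $R\bowtie I$-module and reduce projectivity to the existence of a suitable idempotent. Write $S:=R\bowtie I$ and let $\pi\colon S\to R$ be the first-coordinate projection $(r,r+i)\mapsto r$, which is a surjective ring homomorphism; this is the $S$-module structure carried by $R$ (the one implicitly used already in Lemma \ref{qf local regular elemnt} via $\Hom_{R\bowtie I}(R,R\bowtie I)$). Its kernel is $\mathfrak{K}:=\{(0,i)\mid i\in I\}$, so that $R\cong S/\mathfrak{K}$ as $S$-modules. Thus $R$ is projective over $S$ if and only if the short exact sequence $0\to\mathfrak{K}\to S\to R\to 0$ splits, i.e. if and only if $\mathfrak{K}$ is a direct summand of $S$ as an $S$-module. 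By the standard fact that a direct-summand ideal of a ring is generated by an idempotent, this happens exactly when $\mathfrak{K}=Sf$ for some idempotent $f\in S$.

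It then remains to describe such idempotents and to compute $Sf$. First I would pin down the shape of $f$: applying $\pi$ to $\mathfrak{K}=Sf$ gives $\pi(\mathfrak{K})=R\,\pi(f)$ since $\pi$ is a surjective ring map; but $\pi(\mathfrak{K})=0$, so $\pi(f)=1\cdot\pi(f)\in R\,\pi(f)=0$, forcing the first coordinate of $f$ to vanish. Hence $f=(0,e)$ with $e\in I$, and the idempotency $f^2=f$ in $S\subseteq R\times R$ amounts to $e^2=e$, so $e$ is an idempotent of $R$. A direct computation then shows $Sf=S\cdot(0,e)=\{(0,(r+i)e)\mid r\in R,\ i\in I\}=0\times Re$, using that $r+i$ ranges over all of $R$ as $r$ runs through $R$ and $i$ through $I$. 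Therefore $\mathfrak{K}=Sf$ translates precisely into $I=Re$ with $e$ an idempotent of $R$, which gives the forward implication.

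For the converse, given $I=Re$ with $e^2=e$, I would check that $f:=(0,e)$ lies in $S$ (indeed $e=1\cdot e\in Re=I$) and is idempotent, and that $S(0,e)=0\times Re=\mathfrak{K}$ by the same calculation. Then $\mathfrak{K}$ is a direct summand of $S$, whence $R\cong S/\mathfrak{K}\cong S(1-f)$ is projective.

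I expect the only delicate point to be pinning down the $S$-module structure on $R$ and the description of $\mathfrak{K}$; once $R\cong S/\mathfrak{K}$ is in place, the rest is the routine equivalence between projectivity of a cyclic module, splitting of the defining sequence, and generation of $\mathfrak{K}$ by an idempotent, together with the short computation identifying $Sf$ with $0\times Re$. The observation that the first coordinate of the relevant idempotent must be zero is what rigidifies $f$ into the form $(0,e)$ and makes the equivalence clean.
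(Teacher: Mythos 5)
Your proof is correct and follows essentially the same route as the paper: both reduce projectivity of $R$ over $S=R\bowtie I$ to the splitting of the sequence $0\to 0\times I\to R\bowtie I\to R\to 0$. The only difference is in how the idempotent is extracted --- the paper analyzes a section with $\pi(1)=(1,1+j)$ and forces well-definedness to get the idempotent $j^2$, while you invoke the standard fact that a direct-summand ideal is generated by an idempotent $f$ and then show $f$ must have the form $(0,e)$; the two computations produce the same element.
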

\begin{proof}
Consider the short exact sequence of $R\bowtie I$-modules:
$$(\star)\quad 0\longrightarrow 0\times I\stackrel{\iota}\longrightarrow R\bowtie
I\stackrel{\varepsilon}\longrightarrow R \longrightarrow 0$$ where
$i$ is the injection and $\varepsilon(r,r+i)=r$. It is obvious that
$R$ is projective (as an $R\bowtie I$-module) if, and only if,
$(\star)$ split. That imply the existence of an $R\bowtie
I$-morphism $\pi: R\rightarrow R\bowtie I$ such that
$\varepsilon\circ \pi= id(R)$. If $\pi(1):=(a,a+j)$, we get
$1=\varepsilon\circ \pi(1)=\varepsilon(a,a+j)=a$. Hence,
  for an arbitrary element $r\in R$ and all $i\in I$,
$\pi(r)=\pi((r,r+i).1)=(r,r+i)\pi(1)=(r,r+i)(1,1+j)=(r,r(1+j)+i(1+j))$.
But $\pi$ must be well defined. Thus, i(1+j)=0 and then $i=-ij$ for
any $i\in I$. In particular, $j=-j^2$ and so $i=ij^2$. Hence,
$I=Rj^2$ and $j^4=(j^2)^2=(-j)^2=j^2$ (i.e; $j^2$ is an
idempotent element of $R$).\\
Conversely, if $I$ is generated by an idempotent $e$, it is easy, by
considering the $R\bowtie (e)$-morphism $\pi: R\rightarrow R\bowtie
(e)$ defined by $\pi(1)=(1,1-e)$, to check that $(\star)$ splits.
\end{proof}
\begin{corollary}
Let $R$ be a quasi-Frobenius ring and $I$ a proper ideal of $R$. Then, $R\bowtie I$ is quasi-Frobenius if, and only if, $R$ is a projective $R\bowtie I$-module.
\end{corollary}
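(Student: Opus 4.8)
The plan is to recognize that this Corollary is simply the conjunction of the two previously established results, read off under the standing hypothesis that $R$ is quasi-Frobenius. The key observation is that both the characterization of quasi-Frobeniusness of $R\bowtie I$ (Theorem \ref{main result}) and the characterization of projectivity of $R$ over $R\bowtie I$ (Proposition \ref{idemp}) reduce, once $R$ is already known to be quasi-Frobenius, to one and the same condition on $I$: namely that $I=Re$ for some idempotent $e\in R$. So the whole argument amounts to matching these two conditions through that common middle term.

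Concretely, for the forward implication I would assume $R\bowtie I$ is quasi-Frobenius. Since $R$ is quasi-Frobenius by hypothesis, Theorem \ref{main result} applies and yields $I=Ra$ for some idempotent $a\in R$. This is exactly the hypothesis of the ``if'' direction of Proposition \ref{idemp}, so I immediately conclude that $R$ is projective as an $R\bowtie I$-module.

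For the converse I would assume $R$ is projective as an $R\bowtie I$-module. Proposition \ref{idemp} then gives $I=Re$ with $e$ an idempotent of $R$. Combining this with the standing assumption that $R$ is quasi-Frobenius puts us in precisely the situation covered by the ``if'' direction of Theorem \ref{main result}, which then forces $R\bowtie I$ to be quasi-Frobenius. This closes the equivalence.

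I do not expect any genuine obstacle here: the real content has already been absorbed into Theorem \ref{main result} and Proposition \ref{idemp}, and the only thing to verify is that the idempotent-generated-ideal condition $I=Re$ serves as the pivot linking the two statements. The single point worth stating explicitly, rather than glossing over, is that the hypothesis ``$R$ is quasi-Frobenius'' is what makes Theorem \ref{main result} usable in both directions without having to re-derive that $R$ is quasi-Frobenius from $R\bowtie I$; everything else is a direct citation.
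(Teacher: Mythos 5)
Your proposal is correct and follows exactly the paper's own route: the paper's proof is the one-line observation that the corollary follows from Theorem \ref{main result} and Proposition \ref{idemp}, with the idempotent-generated ideal condition $I=Re$ serving as the common pivot, precisely as you describe. You have merely made the citation explicit in both directions.
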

\begin{proof}
Follows directly from Theorem \ref{main result} and Proposition \ref{idemp}.
\end{proof}

\bibliographystyle{amsplain}

\end{document}